\titleformat{\chapter}[display]
{\normalfont\huge\bfseries}{\chaptertitlename\\thechapter}{20pt}{\Huge}
\titleformat{\subsubsection}[runin]
{\normalfont\normalsize\bfseries}{\thesubsubsection}{1em}{}
\titleformat{\paragraph}[runin]
{\normalfont\normalsize\bfseries}{\theparagraph}{1em}{}
\titleformat{\subparagraph}[runin]
{\normalfont\normalsize\bfseries}{\thesubparagraph}{1em}{}
\titlespacing*{\chapter} {0pt}{50pt}{40pt}
\titlespacing*{\section} {0pt}{3.5ex plus 1ex minus .2ex}{2.3ex plus .2ex}
\titlespacing*{\subsection} {0pt}{3.25ex plus 1ex minus .2ex}{1.5ex plus .2ex}
\titlespacing*{\subsubsection}{0pt}{3.25ex plus 1ex minus .2ex}{1.5ex plus .2ex}
\titlespacing*{\paragraph} {0pt}{3.25ex plus 1ex minus .2ex}{1em}
\titlespacing*{\subparagraph} {\parindent}{3.25ex plus 1ex minus .2ex}{1em}
\subjclass[2000]{Primary 16S35; Secondary 16W30}
\newtheorem{theorem}{Theorem}[section]
\newtheorem{lemma}[theorem]{Lemma}
\newtheorem{proposition}[theorem]{Proposition}
\newtheorem{corollary}[theorem]{Corollary}
\newtheorem{remark}[theorem]{Remark}
\theoremstyle{definition}
\def\iproof{\begin{proof}}
\def\fproof{\end{proof}}
\def\td{\widetilde}
\def\la{\langle}
\def\ra{\rangle}
\def\Cx{\mathbb{C}}
\def\iarray{\begin{array}}
\def\farray{\end{array}}
\def\ieqna{\begin{eqnarray*}}
\def\feqna{\end{eqnarray*}}
\def\ieqn{\begin{eqnarray}}
\def\feqn{\end{eqnarray}}
\def\ienu{\begin{enumerate}}
\def\fenu{\end{enumerate}}
\def\iitem{\begin{itemize}}
\def\fitem{\end{itemize}}
\def\td{\widetilde}
\def\la{\langle}
\def\ra{\rangle}
\def\ds{\displaystyle}
\newcommand{\comb}[2]{\left(\!\!\begin{array}{c}#1\\ #2\end{array}\!\!\right)}
\begin{document}

\title{The Groebner basis of a polynomial system}

\author{Christian Valqui}
\address{Christian Valqui\\
Pontificia Universidad Cat\'olica del Per\'u, Secci\'on Matem\'aticas, PUCP,
Av. Universitaria 1801, San Miguel, Lima 32, Per\'u.}

\address{Instituto de Matem\'atica y Ciencias Afines (IMCA) Calle Los Bi\'ologos 245.
Urb San C\'esar. La Molina, Lima 12, Per\'u.}
\email{cvalqui@pucp.edu.pe}

\thanks{Christian Valqui was supported by PUCP-DGI-2013-3036}

\author{Marco Solorzano}
\address{Marco Solorzano\\
Pontificia Universidad Cat\'olica del Per\'u, Secci\'on Matem\'aticas, PUCP,
Av. Universitaria 1801, San Miguel, Lima 32, Per\'u.}
\email{marco.solorzano@pucp.pe}

\subjclass[2010]{Primary 14R15; Secondary 13F20, 11B99}
\keywords{Jacobian, Groebner basis, Catalan numbers}
\begin{abstract}
We compute the Groebner basis of a system of polynomial equations related to the Jacobian conjecture
using a recursive formula for the Catalan numbers.
\end{abstract}

\maketitle

\section{Introduction}
In this paper $K$ is a characteristic zero field and $K[y]((x^{-1}))$ is the algebra of Laurent series
in $x^{-1}$ with  coefficients in $K[y]$.
In a recent article the following theorem was proved~\cite{GGV}*{Theorem 1.9}.
\begin{theorem}\label{principal} The Jacobian conjecture in dimension two is false if and only if
there exist

\begin{itemize}

\smallskip

\item[-] $P,Q\in K[x,y]$ and $C,F\in K[y]((x^{-1}))$,

\smallskip

\item[-] $n,m\in \mathds{N}$ such that $n\nmid m$ and $m\nmid n$,

\smallskip

\item[-] $\lambda_i\in K$ ($i=0,\dots,m+n-2$) with $\lambda_0=1$,

\smallskip

\end{itemize}
such that
\begin{itemize}

\smallskip

\item[-] $C$ has the form
$$
C = x + C_{-1}x^{-1}+ C_{-2}x^{-2} + \cdots \qquad\text{with each $C_{-i}\in K[y]$,}
$$

\smallskip

\item[-] $gr(C)=1$ and $gr(F)=2-n$, where $gr$ is the total degree,

\smallskip

\item[-] $F_+=x^{1-n}y$, where $F_+$ is the term of maximal degree in $x$ of $F$,

\smallskip

\item[-] $C^n=P$ and $Q=\sum_{i=0}^{m+n-2}\lambda_i C^{m-i}+F$.

\smallskip

\end{itemize}
Furthermore, under these conditions $(P,Q)$ is a counterexample to the Jacobian conjecture.
\end{theorem}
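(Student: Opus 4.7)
The plan is to establish the biconditional by proving each direction separately, with the ``only if'' implication absorbing nearly all of the technical effort. For that direction, one starts from a hypothetical counterexample $(P,Q)\in K[x,y]\times K[x,y]$ to the two-dimensional Jacobian conjecture --- i.e., a pair with constant nonzero Jacobian determinant but $K[P,Q]\subsetneq K[x,y]$ --- and must construct $C$, $F$, the scalars $\lambda_i$, and integers $n,m$ satisfying the listed properties.

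First I would carry out the classical reductions. Choosing a counterexample minimal in $\max(\deg P,\deg Q)$, standard Abhyankar--Moh style manipulations with elementary automorphisms of $K[x,y]$ force the degrees $n:=\deg P$ and $m:=\deg Q$ to satisfy $n\nmid m$ and $m\nmid n$; otherwise an automorphism would decrease the degree pair, contradicting minimality. After a linear change of coordinates one may further arrange that the highest-$x$-degree form of $P$ is $x^n$, because for a Jacobian pair the top-weight components of $P$ and $Q$ must be proportional powers of a common linear form in $x,y$. I would then embed $K[x,y]$ in $K[y]((x^{-1}))$ and use the characteristic-zero hypothesis to extract, via the binomial series, the unique $n$-th root $C = x + C_{-1}x^{-1}+C_{-2}x^{-2}+\cdots$ with $C^n=P$. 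The family $\{C^{m-i}\}_{i\ge 0}$ is a topological basis of the $x^{-1}$-adic completion, so $Q$ admits a unique expansion $Q=\sum_{i\ge 0}\lambda_i C^{m-i}$, and setting $F:=Q-\sum_{i=0}^{m+n-2}\lambda_i C^{m-i}$ defines the residual series. Normalization gives $\lambda_0=1$.

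The principal obstacle --- and the step I expect to demand the most work --- is showing that this $F$ has total degree exactly $2-n$ with $F_+=x^{1-n}y$. The idea is to translate the constant-Jacobian hypothesis into a statement about $\operatorname{Jac}(C,F)$: since each bracket $\operatorname{Jac}(C^n,C^{m-i})$ vanishes, we have $\operatorname{Jac}(P,Q)=\operatorname{Jac}(C^n,Q)=nC^{n-1}\operatorname{Jac}(C,F)$, so the hypothesis that $\operatorname{Jac}(P,Q)$ is a nonzero constant forces $\operatorname{Jac}(C,F)$ to be a specific scalar multiple of $C^{1-n}$. Matching leading contributions in both the $x$-degree filtration and the total-degree filtration $gr$ then pins down $F_+$ and $gr(F)$. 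The indivisibility $n\nmid m,\ m\nmid n$ enters here to guarantee that no term $\lambda_i C^{m-i}$ can absorb $x^{1-n}y$, so that this monomial genuinely belongs to $F$ and not to the series portion that was subtracted off.

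For the converse implication together with the \emph{furthermore} clause, I would reverse this computation: starting from the given data, direct substitution yields $\operatorname{Jac}(P,Q)=nC^{n-1}\operatorname{Jac}(C,F)$, and the prescribed forms of $C$ and $F_+$ force this to be a nonzero constant in $K$; one must also check that the a priori Laurent series $Q$ actually lies in $K[x,y]$, which follows from $\operatorname{Jac}(P,Q)\in K$ and $P\in K[x,y]$ by a routine argument on negative powers of $x$. Finally, the Abhyankar--Moh--Suzuki theorem rules out $K[P,Q]=K[x,y]$ because the degrees $n,m$ are mutually non-divisible, so $(P,Q)$ is indeed a counterexample. This direction is essentially bookkeeping once the series-theoretic machinery of the first half has been installed.
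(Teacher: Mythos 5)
The paper does not actually prove this statement: Theorem~\ref{principal} is quoted verbatim from \cite{GGV}*{Theorem 1.9} and serves here only as motivation for setting up the system~\eqref{sistema de ecuaciones}, so there is no internal proof to compare your attempt against.

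Judged on its own terms, your sketch has the right skeleton --- minimal counterexample, non-divisibility of the degrees, extraction of the $n$-th root $C$ of $P$ in $K[y]((x^{-1}))$ via the binomial series, expansion of $Q$ in the $C^{m-i}$, and the identity $\Jac(P,Q)=nC^{n-1}\Jac(C,F)$ --- and your treatment of the converse together with the \emph{furthermore} clause is essentially sound: the Jacobian is forced to equal the nonzero constant $n$ because $\Jac(C,F)$ has $x$-degree exactly $1-n$, and Jung--van der Kulk excludes $(P,Q)$ from being an automorphism when neither degree divides the other. However, two steps in the forward direction are asserted where the real content lies. First, ``after a linear change of coordinates the highest-$x$-degree form of $P$ is $x^n$'' conceals the nontrivial fact that the leading form of a Jacobian pair is (essentially) a power of a single linear form; this requires a Newton-polygon/support analysis, not just minimality of the counterexample. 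Second, and more seriously, the condition $gr(F)=2-n$ is a statement about the \emph{total} degree of the entire tail $F$, i.e.\ the bound $\deg_y F_{-k}\le 2-n+k$ for every $k\ge n-1$, not merely about the leading $x$-coefficient. Your argument of ``matching leading contributions'' pins down $F_{1-n}$ up to normalization (and even there a rescaling of $x$, $y$ and $Q$ is needed to turn $F_{1-n}$ into exactly $y$ and $\lambda_0$ into $1$), but it does not control the $y$-degrees of the lower coefficients $F_{-k}$, which is precisely where the Jacobian hypothesis must be exploited beyond the top term. As a blind reconstruction the proposal is a plausible roadmap, but those two gaps keep it from being a proof.
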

Motivated by this result, the authors consider the following slightly more general situation. Let $D$ be a $K$-algebra (in Theorem~\ref{principal} we take $D=K[y]$),
$n,m$ positive integers such that $n\nmid m$ and $n\nmid m$, $(\lambda_i)_{1\le i\le n+m-2}$ a family of elements
in $K$ with $\lambda_0=1$ and $F_{1-n}\in D$ (in Theorem~\ref{principal} we take $F_{1-n}=y$).
A Laurent series in $x^{-1}$ of the form
$$
C = x + C_{-1}x^{-1}+ C_{-2}x^{-2} + \cdots \qquad\text{with $C_{-i}\in D$,}
$$
is \textsl{a solution of the system} $S(n,m,(\lambda_i),F_{1-n})$, if there exist
 $P,Q\in D[x]$ and $F \in D[[x^{-1}]]$, such that
\begin{align}
& F = F_{1-n} x^{1-n} + F_{-n} x^{-n} + F_{-1-n} x^{-1-n} +\cdots,\label{forma de F}\\
&P=C^n\qquad\text{and}\qquad Q=\sum_{i=0}^{m+n-2}\lambda_i C^{m-i}+F.\label{forma de P y de Q}
\end{align}

For example, if $n=2$, then
\begin{align*}
P(\text{\bf{x}})=&C^2=\text{\bf{ x}}^2+2 C_{-1}+ 2 C_{-2}\text{\bf{ x}}^{-1} + (C_{-1}^2+
2 C_{-3})\text{\bf{ x}}^{-2} +
(2C_{-1}C_{-2}+2 C_{-4})\text{\bf{ x}}^{-3}\\
&+ (C_{-2}^2+2C_{-1}C_{-3}+2 C_{-5})\text{\bf{ x}}^{-4}+(2 C_{-2} C_{-3} + 2 C_{-1} C_{-4} +
2 C_{-6})\text{\bf{ x}}^{-5}+\dots,
\end{align*}
and the condition $C^2\in K[x]$ translates into the following conditions on $C_{-k}$:
\begin{align*}
0=(C^2)_{-1}=& \ 2 C_{-2},\\
0=(C^2)_{-2}=& \ C_{-1}^2 + 2 C_{-3},\\
0=(C^2)_{-3}=& \ 2 C_{-1} C_{-2} + 2 C_{-4},\\
0=(C^2)_{-4}=& \ C_{-2}^2 + 2 C_{-1} C_{-3} + 2 C_{-5},\\
0=(C^2)_{-5}=& \ 2 C_{-2} C_{-3} + 2 C_{-1} C_{-4} + 2 C_{-6},\\
0=(C^2)_{-6}=& \ C_{-3}^2 + 2 C_{-2} C_{-4} + 2 C_{-1} C_{-5} + 2 C_{-7},\\
0=(C^2)_{-7}=& \ 2 C_{-3} C_{-4} + 2 C_{-2} C_{-5} + 2 C_{-1} C_{-6} + 2 C_{-8},\\
0=(C^2)_{-8}=& \ C_{-4}^2 + 2 C_{-3} C_{-5} + 2 C_{-2} C_{-6} + 2 C_{-1} C_{-7} + 2 C_{-9},\\
\vdots
\end{align*}
In general, the condition $P(x)=C^n\in K[x]$ yields equations $(C^n)_{-k}=0$, whereas the condition
$Q(x)=\sum_{i=0}^{m+n-2}\lambda_i C^{m-i}+F\in K[x]$ gives us the equations
$\left(\sum_{i=0}^{m+n-2}\lambda_i C^{m-i}+F\right)_{-k}=0$, where we note that $F_{-k}=0$ for $k=1,\dots,n-2$.

It is easy to see (e.g.~\cite[Remark 1.13]{GGV}) that the first $m+n-2$ coefficients determine
the others, i.e.,
the coefficients $C_{-1},\dots,C_{-m-n+2}$ determine univocally the coefficients $C_{-k}$ for $k>m+n-2$.
Moreover, the $F_{-k}$ for $k>n-1$ depend only on $F_{1-n}$ and $C$.
Consequently, having a solution $C$ to the system $S(n,m,(\lambda_i),F_{1-n})$
is the same as having a solution  $(C_{-1},\dots,C_{-m-n+2})$ to the system
\begin{equation}
\begin{aligned}
&E_k:=(C^n)_{-k}  = 0, && \text{for $k=1,\dots, m-1$,}\\
&E_{m-1+k}:=\left(\sum_{i=0}^{m+n-2}\lambda_i C^{m-i}\right)_{-k}  = 0, &&\text{for $k=1,\dots,n-2$,}\\
&E_{m+n-2}:=\left(\sum_{i=0}^{m+n-2}\lambda_i C^{m-i}\right)_{1-n} + F_{1-n} = 0,
\end{aligned}\label{sistema de ecuaciones}
\end{equation}
with $m+n-2$ equations $E_k$ and $m+n-2$ unknowns $C_{-k}$.

In order to understand the solution set of this system, it would be very helpful to find a Groebner basis
for the ideal generated by the polynomials $E_k$ in $D[C_{-1},\dots,C_{m+n-2}]$.
In this paper we compute such a Groebner basis of~\eqref{sistema de ecuaciones} in a very particular case:
we assume
$n=2$, $m=2r+1$ for some $r>0$, and $\lambda_i=0$
for $i>0$. Moreover we consider $D=\mathds{C}[y]$ and $F_{1-n}=y$, as in Theorem~\ref{principal}.

\section{Computation of a Groebner basis for $I_{2r}$}
\setcounter{equation}{0}
Assume
$n=2$, $m=2r+1$ for some $r>0$, and $\lambda_i=0$
for $i>0$. Set also $D=\mathds{C}[y]$ and $F_{1-n}=y$.

Then the system~\eqref{sistema de ecuaciones} reads
\begin{eqnarray}
\begin{array}{rcl}
E_i&=&\left\{\iarray{ll}
(C^2)_{-i},& i=1,\ldots,2r\\
(C^{2r+1})_{-1}+y,& i=2r+1,
\farray\right.\\
\end{array}\label{eqGGV}
\end{eqnarray}
where $(C^2)_{-i}$ denotes the coefficient of $x^{-i}$ in the Laurent series $C^2$. Explicitly, the
polynomials $E_i$ are given by
\ieqn
E_1&:=&2C_{-2},\nonumber\\
E_2&:=&2C_{-3}+C_{-1}^2,\nonumber\\
E_3&:=&2C_{-4}+2C_{-2}C_{-1},\nonumber\\
E_4&:=&2C_{-5}+2C_{-3}C_{-1}+C_{-2}^2,\nonumber\\
E_5&:=&2C_{-6}+2C_{-2}C_{-3}+2C_{-4}C_{-1},\label{ecuacion01}\\
E_6&:=&2C_{-7}+2C_{-5}C_{-1}+2C_{-4}C_{-2}+C_{-3}^2,\nonumber\\
&\vdots&\nonumber\\
E_{2r-1}&:=&2C_{-2r}+2C_{-2}C_{-2r+3}+2C_{-4}C_{-2r+5}+\cdots+2C_{-2r+4}C_{-3}+2C_{-2r+2}C_{-1},\nonumber\\
E_{2r}&:=&2C_{-2r-1}+2C_{-2r+1}C_{-1}+2C_{-2r+2}C_{-2}+\cdots+
C_{-r}^2,\nonumber\\
E_{2r+1}&:=&(C^{2r+1})_{-1}
+y.\nonumber
\feqn

Each $E_i$ is a polynomial in the ring $\Cx[C_{-1},C_{-2},\ldots,C_{-2r-1},y]$, and the $2r+1$ polynomials
yield the ideal
\ieqna
I&=&\la E_1,\ldots,E_{2r},E_{2r+1}\ra.
\feqna
Our goal  is to find a Groebner basis for the ideal $I$. However, in this section we will
only compute a Groebner basis
$(\td{E}_1,\td{E}_2,\ldots,\td{E}_{2r-1},\td{E}_{2r})$ for the ideal $I_{2r}:=\la
E_1,E_2,\ldots,E_{2r-1},E_{2r}\ra$.
Note that for $i=1\dots, 2r$ we have
\begin{equation}\label{E sub i}
E_{i}=2C_{-i-1}+\sum_{k=1}^{i-1} C_{-k}C_{k-i}.
\end{equation}

\noindent We first replace the odd numbered polynomials $E_1,E_3,E_5,E_7,\ldots,E_{2r-1}$ by
new polynomials $\td{E}_1,\td{E}_3,\td{E}_5,\td{E}_7,\ldots,\td{E}_{2r-1}$ defined by
\ieqn
\td{E}_1&:=&C_{-2}=\frac12E_1,\nonumber\\
\td{E}_3&:=&C_{-4}=\frac12E_3-\td{E}_1C_{-1},\nonumber\\
\td{E}_5&:=&C_{-6}=\frac12E_5-\td{E}_1C_{-3}-\td{E}_3C_{-1},\label{ecuacionPar}\\
\td{E}_7&:=&C_
{-8}=\frac12E_7-\td{E}_1C_{-5}-\td{E}_3C_{-3}-\td{E}_5C_
{-1},\nonumber\\
\td{E}_9&:=&C_{-10}=\frac12E_9-\td{E}_1C_{-7}-\td{E}_3C_{-5}-\td{E}_5C_
{-3}-\td{E}_7C_{-1},\nonumber\\
&\vdots&\nonumber\\
\td{E}_{2r-1}&:=&C_
{-2r}=\frac12E_{2r-1}-\sum_{i=1}^{r-1}\td{E}_{2i-1}C_
{-2(r-i)+1}.\nonumber
\feqn
\begin{remark}
We have
\begin{equation}\label{impares}
    \la E_1,E_3,\ldots,E_{2r-1}\ra=\la \td{E}_1,\td{E}_3,\ldots,\td{E}_{2r-1}\ra.
\end{equation}
In fact, if we define $\td{I}_k^{odd}:= \la \td{E}_1,\td{E}_3,\ldots,\td{E}_{2k-1}\ra$,
then~\eqref{ecuacionPar} clearly implies
\begin{equation}\label{caso impar}
E_{2i+1}-2\td{E}_{2i+1}\in\td{I}_{i}^{odd},
\end{equation}
and so we get $\la E_1,E_3,\ldots,E_{2i+1}\ra\subset \la \td{E}_1,\td{E}_3,\ldots,\td{E}_{2i+1}\ra$
for all $i$.
Using induction one sees that we also have
$\la \td{E}_1,\td{E}_3,\ldots,\td{E}_{2r-1}\ra\subset\la E_1,E_3,\ldots,E_{2r-1}\ra$,
as desired.
\end{remark}

\noindent The next proposition deals with
$E_2,E_4,E_6,\ldots,E_{2r}$, the first $r$  even numbered polynomials.

\begin{proposition}
\label{caso par}
For all $j\in\mathds{N}$ there exists $\lambda_j$ such that for
$\td{E}_{2j}:=C_{-2j-1}+\lambda_jC_{-1}^{j+1}$
we have
\begin{equation}\label{formula}
C_{-2j-1}+\lambda_jC_{-1}^{j+1}-\ds{\frac12}E_{2j}\in\td{I}_{2j-1}:=
\la\td{E}_1,\td{E}_2,\ldots,\td{E}_{2j-2},\td{E}_{2j-1}\ra.
\end{equation}
Moreover, if we set $\lambda_0=-1$ and $E_0=\td{E}_0:=0$, then for $j>0$, $\lambda_j$ is given by
\begin{equation}\label{lambda}
    \lambda_j:=\frac12 \left(\sum_{k=0}^{j-1} \lambda_k \lambda_{j-k-1} \right).
\end{equation}

\end{proposition}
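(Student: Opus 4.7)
The plan is to proceed by induction on $j$. For $j=0$, the statement is vacuous: $\tilde{E}_0 = C_{-1}+\lambda_0 C_{-1}=0$ with $\lambda_0=-1$. For the inductive step, I will exploit the two types of reductions available modulo $\tilde{I}_{2j-1}$: from the odd-indexed generators, $\tilde{E}_{2k-1}=C_{-2k}$ forces $C_{-2k}\equiv 0$ for $k=1,\dots,j$; and from the inductive hypothesis applied to the even-indexed generators, $\tilde{E}_{2k}=C_{-2k-1}+\lambda_k C_{-1}^{k+1}$ forces $C_{-2k-1}\equiv -\lambda_k C_{-1}^{k+1}$ for $k=0,1,\dots,j-1$ (using also $\lambda_0=-1$ for the base case $k=0$).

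The key computation is then to reduce $\tfrac{1}{2}E_{2j}$ modulo $\tilde{I}_{2j-1}$. Using formula~\eqref{E sub i},
$$
\tfrac{1}{2}E_{2j}=C_{-2j-1}+\tfrac{1}{2}\sum_{k=1}^{2j-1}C_{-k}C_{-(2j-k)}.
$$
In the sum, every term with $k$ even vanishes modulo $\tilde{I}_{2j-1}$ (since then $C_{-k}\equiv 0$). The surviving terms come from odd $k=2\ell+1$ with $\ell=0,1,\dots,j-1$; for these, $2j-k=2(j-\ell-1)+1$ is also odd, so
$$
C_{-(2\ell+1)}C_{-(2(j-\ell-1)+1)}\equiv(-\lambda_\ell C_{-1}^{\ell+1})(-\lambda_{j-\ell-1}C_{-1}^{j-\ell})=\lambda_\ell\lambda_{j-\ell-1}C_{-1}^{j+1}\pmod{\tilde{I}_{2j-1}}.
$$
Summing over $\ell$ yields
$$
\tfrac{1}{2}E_{2j}\equiv C_{-2j-1}+\tfrac{1}{2}\Bigl(\sum_{\ell=0}^{j-1}\lambda_\ell\lambda_{j-\ell-1}\Bigr)C_{-1}^{j+1}\pmod{\tilde{I}_{2j-1}},
$$
and defining $\lambda_j$ by~\eqref{lambda} gives exactly $\tfrac{1}{2}E_{2j}\equiv C_{-2j-1}+\lambda_j C_{-1}^{j+1}$, which is~\eqref{formula}.

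The argument is entirely a bookkeeping computation; no serious obstacle arises beyond carefully tracking which $C_{-i}$ reduce to what. The main points requiring care are (i) verifying that both indices in each surviving product are odd, which depends crucially on $2j$ being even, and (ii) checking the boundary contributions $\ell=0$ and $\ell=j-1$ where the reductions use $\lambda_0=-1$ rather than an inductively-constructed value. With these in hand, the recursion for $\lambda_j$ emerges directly from matching the coefficient of $C_{-1}^{j+1}$.
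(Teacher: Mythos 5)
Your proof is correct and follows essentially the same route as the paper: split the quadratic sum in $E_{2j}$ into even- and odd-indexed terms, kill the even ones via the $\td{E}_{2k-1}$, and reduce the odd ones via the inductive hypothesis to obtain the coefficient $\frac12\sum_k\lambda_k\lambda_{j-k-1}$ of $C_{-1}^{j+1}$. The only differences are cosmetic (indexing of the sum and the phrasing of the $j=0$ base case).
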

\begin{proof}
We proceed by induction on $j$.
For $j=0$ clearly~\eqref{formula} is satisfied. For $j=1$, with $\ds{\lambda_1=\frac12}$ calculated
by~\eqref{lambda},
we have
$$\ds{C_{-3}+\frac12C_{-1}^2-\frac12E_2=0\in\la\td{E}_1\ra},$$
as desired.

From~\eqref{E sub i} we have
\begin{align*}
E_{2j}=&2C_{-2j-1}+\sum_{k=1}^{2j-1} C_{-k}C_{k-2j}\\
=&2C_{-2j-1}+\sum_{k=0}^{j-1} C_{-2k-1}C_{2k+1-2j}
+\sum_{k=1}^{j-1} C_{-2k}C_{2k-2j},
\end{align*}
and clearly $\sum_{k=1}^{j-1} C_{-2k}C_{2k-2j}\in \td{I}_{2j-1}$. Therefore we get
\begin{equation}\label{suma}
C_{-2j-1}-\ds{\frac12}E_{2j}\in-\frac12 \left(\sum_{k=0}^{j-1} C_{-2k-1}C_{2k+1-2j}\right)+\td{I}_{2j-1}.
\end{equation}
By the induction hypothesis, for $0\le k\le j-1$ there exist $\lambda_{k}$ and $\lambda_{j-k-1}$ such that
$$
C_{-2k-1}=-\lambda_k C_{-1}^{k+1}+\td{E}_{2k} \quad\text{and}\quad
C_{2k+1-2j}=-\lambda_{j-k-1} C_{-1}^{j-k}+\td{E}_{2(j-k-1)};
$$
hence
$$
C_{-2k-1}C_{2k+1-2j}\in \lambda_k \lambda_{j-k-1} C_{-1}^{j+1}+\td{I}_{2j-1}.
$$
From~\eqref{suma} we obtain
$$
C_{-2j-1}-\ds{\frac12}E_{2j}\in-\frac12 \left(\sum_{k=0}^{j-1} \lambda_k \lambda_{j-k-1} \right)
C_{-1}^{j+1}+\td{I}_{2j-1},
$$
from which~\eqref{formula} follows with
$\lambda_j=\frac12 \left(\sum_{k=0}^{j-1} \lambda_k \lambda_{j-k-1} \right)$, as desired.
\fproof

\begin{corollary}\label{teorem}
We have
\begin{equation}\label{impares}
    \la E_1,E_2,\ldots,E_{2r}\ra=\la \td{E}_1,\td{E}_2,\ldots,\td{E}_{2r}\ra.
\end{equation}
\end{corollary}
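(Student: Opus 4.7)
The plan is to deduce the corollary directly from two facts already established: the Remark preceding Proposition~\ref{caso par} gives $\langle E_1,E_3,\ldots,E_{2r-1}\rangle = \langle \td{E}_1,\td{E}_3,\ldots,\td{E}_{2r-1}\rangle$ via the inversion of \eqref{ecuacionPar}, and Proposition~\ref{caso par} gives $\td{E}_{2j} - \tfrac12 E_{2j} \in \td{I}_{2j-1}$ for each $j$. With these ingredients in hand, the corollary is a routine bookkeeping induction with no new ingredient required.

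Concretely, I would prove by induction on $k$ that $\langle E_1,\ldots,E_k\rangle = \langle \td{E}_1,\ldots,\td{E}_k\rangle$ for every $k\le 2r$; the base case $k=1$ is immediate since $\td{E}_1=\tfrac12 E_1$. Assuming equality at level $k-1$, split on the parity of $k$. If $k=2i+1$ is odd, \eqref{caso impar} gives $E_{2i+1}-2\td{E}_{2i+1}\in \td{I}_i^{odd}\subseteq \langle \td{E}_1,\ldots,\td{E}_{2i-1}\rangle$, and combined with the induction hypothesis this places $E_{2i+1}$ in $\langle \td{E}_1,\ldots,\td{E}_{2i+1}\rangle$ and $\td{E}_{2i+1}$ in $\langle E_1,\ldots,E_{2i+1}\rangle$. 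If $k=2j$ is even, \eqref{formula} gives $2\td{E}_{2j}-E_{2j}\in \td{I}_{2j-1}=\langle \td{E}_1,\ldots,\td{E}_{2j-1}\rangle$, which equals $\langle E_1,\ldots,E_{2j-1}\rangle$ by the induction hypothesis, so again $E_{2j}$ and $\td{E}_{2j}$ are interchangeable modulo the earlier generators. Taking $k=2r$ yields the stated equality.

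The genuinely non-trivial content — producing the polynomials $\td{E}_{2j}=C_{-2j-1}+\lambda_j C_{-1}^{j+1}$ and verifying \eqref{formula} with the Catalan-like recursion \eqref{lambda} for $\lambda_j$ — was already absorbed into Proposition~\ref{caso par}, so no real obstacle remains at this step. The only thing to watch is that the odd-parity and even-parity induction steps are chained correctly, so that both inclusions are secured simultaneously at every level and each $\td{I}_{2j-1}$ invoked from Proposition~\ref{caso par} is genuinely available on the right-hand side by the induction hypothesis.
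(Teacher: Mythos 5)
Your proposal is correct and follows essentially the same route as the paper: both arguments induct on $k$, using \eqref{caso impar} for odd indices and \eqref{formula} from Proposition~\ref{caso par} for even indices to show $E_{k+1}-2\td{E}_{k+1}\in\td{I}_k$, and then deduce both inclusions level by level. Your version merely spells out the parity split and the simultaneous chaining of the two inclusions, which the paper leaves terse.
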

\begin{proof}
In fact, if we define $\td{I}_k:= \la \td{E}_1,\td{E}_2,\ldots,\td{E}_{k}\ra$,
then~\eqref{caso impar} and Proposition~\ref{caso par} clearly imply
$$
E_{k+1}-2\td{E}_{k+1}\in\td{I}_{k},
$$
and so we get $\la E_1,E_2,\ldots,E_{k+1}\ra\subset \la \td{E}_1,\td{E}_2,\ldots,\td{E}_{k+1}\ra$ for all $k$.
 Since $\la E_1\ra=\la\td{E}_1\ra$, using induction one also obtains
$\la \td{E}_1,\td{E}_2,\ldots,\td{E}_{k}\ra\subset\la E_1,E_2,\ldots,E_{k}\ra$,
as desired.
\end{proof}
We can replace the system~\eqref{ecuacion01} with the following set of equations.

\begin{align*}
\td{E}_1=C_{-2}=0,\quad\td{E}_3=C_{-4}=0,&\quad \dots \quad\td{E}_{2r-1}=C_{-2r}=0,\\
\td{E}_2=C_{-3}+\lambda_{1}C_{-1}^2=0,\quad \td{E}_4=C_{-5}+\lambda_{2}C_{-1}^3=0,&\quad\dots\quad
\td{E}_{2r}=C_{-2r-1}+\lambda_{r}C_{-1}^{r+1}=0,\\
E_{2r+1}=(C^{2r+1})_{-1}
+y=0.&
\end{align*}

\begin{proposition}\label{proposicion03}
If we fix the lex order with $C_{-2r-1}>C_{-2r}>\cdots>C_{-3}>C_{-2}>C_{-1}>y$, then
$G_{2r}=(\td{E}_1,\td{E}_2,\ldots,\td{E}_{2r-1},\td{E}_{2r})$ is a Groebner  basis of the ideal
$$
\td{I}_{2r}=\la \td{E}_1,\td{E}_2,\ldots,\td{E}_{2r-1},\td{E}_{2r}\ra
$$
\end{proposition}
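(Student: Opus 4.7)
The plan is to exploit the fact that the leading terms of the $\td{E}_i$'s, under the stated lex order, are single, pairwise distinct variables; this reduces the claim to a direct application of Buchberger's coprime-leading-terms criterion.

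First I would identify the leading terms. For the odd-indexed generators, \eqref{ecuacionPar} shows $\td{E}_{2i-1} = C_{-2i}$, so $\LT(\td{E}_{2i-1}) = C_{-2i}$. For the even-indexed generators, $\td{E}_{2j} = C_{-2j-1} + \lambda_j C_{-1}^{j+1}$; since the chosen lex order places $C_{-2j-1}$ strictly above $C_{-1}$, we have $\LT(\td{E}_{2j}) = C_{-2j-1}$. Collecting,
\[
\{\LT(\td{E}_1),\LT(\td{E}_2),\ldots,\LT(\td{E}_{2r})\}
 = \{C_{-2},C_{-3},C_{-4},\ldots,C_{-2r},C_{-2r-1}\},
\]
a set of $2r$ pairwise distinct variables.

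Next I would invoke Buchberger's first criterion: if two polynomials $f,g$ have leading monomials whose $\gcd$ is $1$, then their $S$-polynomial $S(f,g)$ reduces to $0$ modulo $\{f,g\}$. In our situation any two leading terms $\LT(\td{E}_i),\LT(\td{E}_j)$ with $i\neq j$ are distinct single variables, hence coprime. Therefore every $S$-polynomial $S(\td{E}_i,\td{E}_j)$ reduces to $0$ modulo $G_{2r}$.

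Finally, by Buchberger's criterion this vanishing of all $S$-polynomial remainders is precisely what is needed to conclude that $G_{2r}$ is a Groebner basis of $\td{I}_{2r}$. There is essentially no obstacle here: the whole content of the previous section has been to arrange the generators so that each one's leading term is a fresh variable, and once that is visible the Groebner basis property is automatic. The only item worth making explicit in a fully written-out proof is the verification, term by term in $\td{E}_{2j}$, that $C_{-2j-1}\succ C_{-1}^{j+1}$ in the lex order, which is immediate since $C_{-2j-1}$ is lex-greater than $C_{-1}$.
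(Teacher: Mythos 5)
Your proof is correct, and it takes a genuinely different (and slicker) route than the paper. The paper proves the proposition by brute force: it splits into the three cases (odd--even, odd--odd, even--even), writes out each $S$-polynomial $S(\td{E}_i,\td{E}_j)$ explicitly, and runs the division algorithm step by step until the remainder vanishes. You instead observe that the leading terms $C_{-2},C_{-3},\dots,C_{-2r-1}$ are pairwise distinct single variables, hence pairwise coprime, and invoke Buchberger's first criterion (relatively prime leading monomials force $\overline{S(f,g)}^{\{f,g\}}=0$); this is exactly Proposition~4 of Chapter~2, \S 9 in the Cox--Little--O'Shea text that the paper already cites. Your identification of the leading terms is sound: $\LT(\td{E}_{2i-1})=C_{-2i}$ trivially, and $\LT(\td{E}_{2j})=C_{-2j-1}$ because in lex order $C_{-2j-1}\succ C_{-1}^{\,j+1}$ whenever $C_{-2j-1}>C_{-1}$ as variables (and this holds regardless of whether $\lambda_j$ vanishes). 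What your approach buys is brevity and a conceptual explanation of \emph{why} the basis works --- the whole point of the preceding reductions was to make each generator's leading term a fresh variable; what the paper's explicit computation buys is the list of concrete syzygies, which it then partially reuses in the proof of Theorem~\ref{proposicion10} when it checks the additional $S$-polynomials involving $\td{E}_{2r+1}$ (note that the coprimality shortcut would \emph{not} dispose of $S(\td{E}_{2t},\td{E}_{2r+1})$ there, since $\LT(\td{E}_{2r+1})=\mu_r C_{-1}^{r+1}$ is not coprime to nothing --- it is a power of $C_{-1}$ and the criterion still applies against the $C_{-k}$ leading terms, but one must at least check that case separately).
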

\iproof
We first compute the $S$-polynomials of $G_{2r}$, and prove that they satisfy
$\overline{S(\td{E}_{i},\td{E}_{j})}^{G_{2r}}=0$ for all
 $1\leq i,j\leq 2r$.\\

Consider first the $S$-polynomial of an even-numbered polynomial and an odd numbered polynomial.
So look now at $\td{E}_{2s-1}$ and $\td{E}_{2t}$, with $1\leq s,t\leq r$. We have
\ieqna
S(\td{E}_{2s-1},\td{E}_{2t})&=&C_{-2t-1}C_{-2s}-C_{-2s}(C_{-2t-1}+\lambda_{t}C_{-1}^{t+1})\\
&=&-\lambda_{t}C_{-1}^{t+1}C_{-2s}\\
&=&-\lambda_{t}C_{-1}^{t+1}\td{E}_{2s-1},
\feqna
and so $\overline{S(\td{E}_{2s-1},\td{E}_{2t})}^{G_{2r}}=0$, for all $1\leq s,t\leq r$.

In the case that $i,j$ are both odd, we take $\td{E}_{2s-1},\td{E}_{2t-1}$, with $1\leq s,t\leq r$.
Then we have
\ieqna
S(\td{E}_{2s-1},\td{E}_{2t-1})&=&C_{-2t}C_{-2s}-C_{-2s}C_{-2t}=0,
\feqna
and trivially we get $\overline{S(\td{E}_{2s-1},\td{E}_{2t-1})}^{G_{2r}}=0$, for all
$1\leq s,t\leq r$.\\

In the last case, when $i,j$ are even, consider $\td{E}_{2s},\td{E}_{2t}$, with $1\leq s,t\leq r$. Then
we have
\ieqna
S(\td{E}_{2s},\td{E}_{2t})&=&C_{-2t-1}(C_{-2s-1}+\lambda_{s}C_{-1}^{s+1})-C_{-2s-1}(C_{-2t-1}+
\lambda_{t}C_{-1}^{t+1})\\
&=&\lambda_{s}C_{-1}^{s+1}C_{-2t-1} -\lambda_{t}C_{-1}^{t+1}C_{-2s-1}.
\feqna
Now we divide $S(\td{E}_{2s},\td{E}_{2t})$ by $G_{2r}$. If $C_{-2t-1}>C_{-2s-1}$, then the leading
term is
$$
lt(S(\td{E}_{2s},\td{E}_{2t}))=\lambda_{s}C_{-1}^{s+1}C_{-2t-1}
$$
and the first division step yields
$$
S(\td{E}_{2s},\td{E}_{2t})=\lambda_sC_{-1}^{s+1}\td{E}_{2t}+R_1,
$$
with $R_1=-\lambda_{s}\lambda_{t}C_{-1}^{s+t+2}-\lambda_{t}C_{-1}^{t+1}C_{-2s-1}$. But
continuing the division algorithm we obtain
$$
R_1=-\lambda_tC_{-1}^{t+1}\td{E}_{2s},
$$
and hence $\overline{S(\td{E}_{2s},\td{E}_{2t})}^{G_{2r}}=0$ in this case.
The case $C_{-2s-1}>C_{-2t-1}$ is similar,
so we get $\overline{S(\td{E}_{2t},\td{E}_{2s})}^{G_{2r}}=0$ for all
 $1\leq s,t\leq r$.
\fproof
From Corollary~\ref{teorem} and Proposition~\ref{proposicion03} we conclude that
$(\td{E}_1,\td{E}_2,\ldots,\td{E}_{2r-1},\td{E}_{2r})$ is a Groebner basis
for the ideal $\la E_1,E_2,\ldots,E_{2r-1},E_{2r}\ra$.\\

\section{A recursive formula for the Catalan numbers and a Groebner basis}
\setcounter{equation}{0}
In this last section we will determine a Groebner basis for the ideal $I$ given by the complete
system~\eqref{eqGGV}.
  In order to achieve this, we need
to establish additional properties of the $\lambda_j$'s which are closely related to the ubiquitous Catalan numbers.
\begin{lemma}\label{catalan}
For all $j\ge 0$ the equality
\begin{equation}\label{c de lambda}
c_j=(-1)^{j+1}2^j\lambda_j
\end{equation}
holds, where $c_j$ are the Catalan numbers given by
$$
c_j=\frac{1}{j+1}\binom{2j}{j}.
$$
\end{lemma}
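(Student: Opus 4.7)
The plan is to proceed by induction on $j$, using the well-known Catalan convolution identity
\[
c_j=\sum_{k=0}^{j-1} c_k c_{j-1-k}\qquad (j\ge 1),\qquad c_0=1,
\]
and matching it with the recursion \eqref{lambda} that defines $\lambda_j$.

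\textbf{Base case.} For $j=0$ we have $c_0=1$ and $(-1)^{0+1}2^0\lambda_0=(-1)(1)(-1)=1$, so \eqref{c de lambda} holds.

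\textbf{Inductive step.} Fix $j\ge 1$ and assume $c_k=(-1)^{k+1}2^k\lambda_k$ for all $0\le k\le j-1$. Substitute this into the Catalan convolution, obtaining
\[
c_j=\sum_{k=0}^{j-1}\bigl((-1)^{k+1}2^k\lambda_k\bigr)\bigl((-1)^{j-k}2^{j-1-k}\lambda_{j-1-k}\bigr)
=(-1)^{j+1}2^{j-1}\sum_{k=0}^{j-1}\lambda_k\lambda_{j-1-k}.
\]
By \eqref{lambda}, the sum on the right equals $2\lambda_j$, so $c_j=(-1)^{j+1}2^j\lambda_j$, completing the induction.

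\textbf{Main point.} The only nontrivial ingredient is the Catalan convolution recursion $c_j=\sum_{k=0}^{j-1}c_kc_{j-1-k}$; once this is invoked, the argument reduces to bookkeeping of signs and powers of $2$. One can either cite this identity as classical or, if a self-contained treatment is desired, prove it from the generating function $\sum_{j\ge 0} c_j t^j=(1-\sqrt{1-4t})/(2t)$, which satisfies $tC(t)^2-C(t)+1=0$ and hence $C(t)=1+tC(t)^2$. The sign $(-1)^{j+1}$ in \eqref{c de lambda} is exactly what is forced by the initial choice $\lambda_0=-1$ (so that $c_0=1$), and the factor $2^j$ compensates the factor $\tfrac12$ appearing in \eqref{lambda}.
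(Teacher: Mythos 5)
Your proof is correct and takes essentially the same route as the paper: both arguments rest on the Catalan convolution $c_j=\sum_{k=0}^{j-1}c_kc_{j-1-k}$ together with the recursion \eqref{lambda}, the paper phrasing it as ``the sequence $d_j=(-1)^{j+1}2^j\lambda_j$ satisfies the same recursion and initial condition as $c_j$'' while you run the equivalent induction in the other direction. The sign and power-of-two bookkeeping checks out.
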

\begin{proof}
The Catalan numbers are uniquely determined (see e.g.~\cite{K}*{p.117 (5.6)})
 by $c_0=1$ and the recursive relation
$$
c_r=\sum_{j=0}^{r-1}c_jc_{r-1-j}.
$$
Set $d_j=(-1)^{j+1}2^j\lambda_j$. Then $d_0=1$, since $\lambda_0=-1$, and
 equality~\eqref{lambda} gives us
\begin{align*}
d_j=&  (-1)^{j+1}2^j\lambda_j\\
&= (-1)^{j+1}2^j\frac12 \left(\sum_{k=0}^{j-1} \lambda_k \lambda_{j-k-1} \right)\\
  =& \sum_{k=0}^{j-1} \left((-1)^{k+1}2^k\lambda_k\right)\left( (-1)^{j-k}2^{j-1-k}\lambda_{j-k-1}\right)\\
  =&\sum_{k=0}^{j-1}d_kd_{j-1-k},
\end{align*}
and hence $d_j=c_j$ for all $j$, as desired.
\end{proof}

Now we prove a recursive formula for the  Catalan numbers.
\begin{proposition}
The Catalan numbers satisfy the following formula
\begin{equation}\label{catalan}
    (2r+1)\frac{c_r}{2^{2r}}=\sum_{j=0}^r(-1)^j\binom{r}{j}\frac{c_j}{2^{2j}}.
\end{equation}
Consequently,  $\lambda_r$ satisfies
\begin{equation}\label{formula para lambda}
    (2r+1)(-1)^{r+1}\lambda_r=\sum_{j=0}^r\left(\iarray{c} r\\j\farray\right)2^{r-j}(-\lambda_j).
\end{equation}
\end{proposition}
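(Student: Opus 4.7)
The second formula \eqref{formula para lambda} is a purely algebraic consequence of the first: substituting $c_j=(-1)^{j+1}2^j\lambda_j$ from Lemma~\ref{catalan} on both sides of the Catalan identity and multiplying through by $2^r$ converts it exactly into \eqref{formula para lambda}. I would therefore present this conversion as the last step and concentrate on proving the Catalan identity.

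To prove the Catalan identity, set $a_j:=c_j/4^j$, so that the claim reads $b_r=(2r+1)a_r$ with $b_r:=\sum_{j=0}^r(-1)^j\binom{r}{j}a_j$. Rescaling the standard Catalan generating function $\sum_j c_j t^j=\frac{1-\sqrt{1-4t}}{2t}$ at $t=x/4$ yields
$$
A(x):=\sum_{j\ge 0} a_j x^j=\frac{2(1-\sqrt{1-x})}{x}.
$$
I would then verify $b_r=(2r+1)a_r$ by showing that the two corresponding generating functions coincide.

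For the binomial-transform side, the standard identity $\sum_r b_r x^r=\frac{1}{1-x}A\!\bigl(\tfrac{-x}{1-x}\bigr)$, together with $1-\tfrac{-x}{1-x}=\tfrac{1}{1-x}$, reduces after a short simplification to $\frac{2((1-x)^{-1/2}-1)}{x}$. For the other side, writing $2r+1=2(r+1)-1$ and using $\sum_r(r+1)c_r t^r=\frac{1}{\sqrt{1-4t}}$ (differentiation of the Catalan generating function, specialized at $t=x/4$), one obtains $\sum_r(2r+1)a_r x^r=\frac{2}{\sqrt{1-x}}-A(x)$. A brief manipulation, essentially the identity $x(1-x)^{-1/2}+(1-x)^{1/2}=(1-x)^{-1/2}$ (which is just $x+(1-x)=1$ after multiplying by $\sqrt{1-x}$), shows this also equals $\frac{2((1-x)^{-1/2}-1)}{x}$. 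Comparing coefficients of $x^r$ yields the claim.

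The only delicate point is the bookkeeping with the various powers of $\sqrt{1-x}$; but since both sides collapse to the same closed form, no clever trick is needed beyond the standard generating-function manipulations and routine differentiation. Once the Catalan identity is in hand, the substitution via Lemma~\ref{catalan} gives \eqref{formula para lambda} with no further work.
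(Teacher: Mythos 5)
Your proof is correct, but it takes a genuinely different route from the paper. The paper proves the Catalan identity by a direct finite computation: it writes $c_j=\frac{1}{j+1}\binom{2j}{j}$, converts the summand via $\binom{-1/2}{j}=\frac{(-1)^j}{2^{2j}}\binom{2j}{j}$ and the absorption identity $\frac{1}{j+1}\binom{r}{j}=\frac{1}{r+1}\binom{r+1}{j+1}$, and then collapses the sum with Vandermonde's convolution $\binom{\alpha+\beta}{r}=\sum_{j}\binom{\alpha}{j}\binom{\beta}{r-j}$ for rational $\alpha,\beta$, finishing with $\binom{r+1/2}{r}=\frac{2r+1}{2^{2r}}\binom{2r}{r}$. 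You instead pass to generating functions: with $a_j=c_j/4^j$ and $A(x)=\frac{2(1-\sqrt{1-x})}{x}$, the binomial-transform formula $\sum_r b_r x^r=\frac{1}{1-x}A\bigl(\frac{-x}{1-x}\bigr)$ gives $\frac{2((1-x)^{-1/2}-1)}{x}$ for the right-hand side, while $2r+1=2(r+1)-1$ and $\sum_r(r+1)a_rx^r=(1-x)^{-1/2}$ give the same closed form for the left-hand side; I checked the reduction $x(1-x)^{-1/2}+(1-x)^{1/2}=(1-x)^{-1/2}$ and it is exactly what is needed. Both arguments are complete; yours trades the paper's knowledge of fractional binomial identities and Vandermonde for the machinery of formal power series and the binomial transform, which is arguably more systematic and would adapt to other transforms of the Catalan sequence, while the paper's is a shorter, entirely finitary manipulation. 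Your handling of the second formula --- substituting $c_j=(-1)^{j+1}2^j\lambda_j$ and multiplying through by $2^r$ --- is exactly what the paper does.
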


\begin{proof} Replacing $c_j$ in~\eqref{catalan} and using~\eqref{c de lambda}
yields~\eqref{formula para lambda}, hence, it suffices to prove only~\eqref{catalan}.
For that, we replace $c_j$ by $\frac{1}{j+1}\binom{2j}{j}$ on the righthand side of~\eqref{catalan}
and use the equalities
$$
\comb{-1/2}{j}=\frac{(-1)^j}{2^{2j}}\comb{2j}{j}\quad\text{and}\quad
\comb{r+1/2}{r}=\frac{(2r+1)}{2^{2r}}\comb{2r}{r}.
$$
Then we have
\ieqna
\sum_{j=0}^r (-1)^j\comb{r}{j} \frac{c_j}{2^{2j}}
&=&\sum_{j=0}^r\frac{(-1)^j}{2^{2j}}\comb{2j}{j}\cdot\frac{1}{(j+1)}\comb{r}{j}\\
&=&\sum_{j=0}^r\comb{-1/2}{j}\frac{1}{r+1}\comb{r+1}{j+1}\\
&=&\frac{1}{(r+1)}\sum_{j=0}^r\comb{-1/2}{j}\cdot \comb{r+1}{r-j}\\
&=&\frac{1}{(r+1)}\comb{r+1/2}{r}\\
&=&\frac{1}{(r+1)}\frac{(2r+1)}{2^{2r}}\comb{2r}{r}\\
&=&(2r+1)\frac{c_r}{2^{2r}}.
\feqna
The second equality follows from the relation
$\displaystyle\frac{1}{j+1}\comb{r}{j}=\displaystyle\frac{1}{(r+1)}\comb{r+1}{j+1}$ and the fourth equality
from $\comb{\alpha+\beta}{r}=\displaystyle\sum_{j=0}^{r}\comb{\alpha}{j}\comb{\beta}{r-j}$, valid for all
$\alpha,\beta\in\mathds{Q}$.
\end{proof}
\begin{proposition}\label{central}
Let $I_{2r}=\la E_1,E_2,\ldots,E_{2r}\ra$. Then
$$
(C^{2r+1})_{-1}\in\mu_r C_{-1}^{r+1}+I_{2r},
$$
for $\mu_r=\frac{2r+1}{(r+1)2^r}\binom{2r}{r}$.
\end{proposition}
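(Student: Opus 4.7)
The plan is to reduce the computation modulo $I_{2r}$ to a coefficient extraction from a truncated Laurent polynomial, and then to evaluate it via the Catalan generating function or, equivalently, via the recursive identity~\eqref{formula para lambda}. By Corollary~\ref{teorem} and Proposition~\ref{caso par}, modulo $I_{2r}$ one has $C_{-2j}\equiv 0$ for $1\le j\le r$ and $C_{-(2j+1)}\equiv -\lambda_j C_{-1}^{j+1}$ for $0\le j\le r$ (the case $j=0$ being tautological since $\lambda_0=-1$). A direct degree inspection of $C^{2r+1}$ shows that $(C^{2r+1})_{-1}$ only involves the coefficients $C_{-k}$ with $k\le 2r+1$, so after the reductions above one has
$$
(C^{2r+1})_{-1}\equiv (T^{2r+1})_{-1}\pmod{I_{2r}},\qquad T:=x+\sum_{j=0}^{r}(-\lambda_j)\,C_{-1}^{j+1}\,x^{-(2j+1)}.
$$
The proposition is therefore reduced to proving $(T^{2r+1})_{-1}=\mu_r\,C_{-1}^{r+1}$.

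With the grading $\deg(x)=1$, $\deg(C_{-1})=2$, every monomial of $T$ has degree $1$, so $(T^{2r+1})_{-1}$ is forced to have the form $\alpha\,C_{-1}^{r+1}$ for some $\alpha\in\mathbb{Q}$. To compute $\alpha$, split $T=x+V$ with $V=\sum_{j=0}^{r}(-\lambda_j)C_{-1}^{j+1}x^{-(2j+1)}$ and apply the binomial theorem to $(x+V)^{2r+1}$. The contribution from $\binom{2r+1}{k}x^{2r+1-k}V^{k}$ to the coefficient of $x^{-1}$ is indexed by multi-indices $(j_1,\ldots,j_k)$ with $j_1+\cdots+j_k=r+1-k$, which forces $1\le k\le r+1$ and automatically ensures $j_i\le r$ (since $\sum j_i\le r$). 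Writing $g(z):=\sum_{j\ge 0}(-\lambda_j)z^{j}$ and recognising the inner sum as the coefficient of $z^{r+1-k}$ in $g(z)^{k}$, a rearrangement of the outer binomial sum identifies $\alpha$ with the coefficient of $z^{r+1}$ in $\bigl(1+z\,g(z)\bigr)^{2r+1}$.

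By Lemma~\ref{catalan} one has $-\lambda_j=(-1)^j c_j/2^j$, hence $g(z)=\sum_{j\ge 0}c_j(-z/2)^j=(\sqrt{1+2z}-1)/z$ by the Catalan generating function $\sum_{j}c_j w^j=(1-\sqrt{1-4w})/(2w)$. Therefore $1+z\,g(z)=\sqrt{1+2z}$, so $\alpha$ is the coefficient of $z^{r+1}$ in $(1+2z)^{(2r+1)/2}$, namely
$$
\alpha=\binom{(2r+1)/2}{r+1}\,2^{r+1}=\frac{(2r+1)!}{2^{r}\,r!\,(r+1)!}=\mu_r.
$$
Equivalently, factoring $(1+2z)^{(2r+1)/2}=(1+2z)^r\sqrt{1+2z}$ turns the extraction into $\sum_{j=0}^{r}\binom{r}{j}\,2^{r-j}\,(-\lambda_j)$, which by the recursive identity~\eqref{formula para lambda} together with Lemma~\ref{catalan} equals $(2r+1)(-1)^{r+1}\lambda_r=(2r+1)c_r/2^r=\mu_r$; this is the more direct use of the recursive formula just proved above. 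The main technical step is the bookkeeping of the middle paragraph, namely enumerating the admissible multi-indices and verifying that the finite truncation of $T$ produces the same $x^{-1}$ coefficient as the full formal series $\sqrt{x^2+2C_{-1}}$; both are routine valuation estimates, and once they are done identification with $\mu_r$ is immediate.
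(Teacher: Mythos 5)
Your proof is correct, but it takes a genuinely different computational route from the paper's. The paper exploits the factorization $C^{2r+1}=(C^2)^r\,C$: since the negative-index coefficients of $C^2$ are literally the generators $E_1,\dots,E_{2r}$ of $I_{2r}$, the reduction $[(C^2)^r]_j\equiv[(x^2+2C_{-1})^r]_j$ is immediate, the whole computation collapses to the single binomial sum $\sum_{k=0}^r\binom{r}{k}(2C_{-1})^{r-k}C_{-2k-1}$, and after substituting $C_{-2k-1}\equiv-\lambda_kC_{-1}^{k+1}$ the scalar is identified with $\mu_r$ via the separately proved identity \eqref{formula para lambda}. You instead reduce each variable $C_{-k}$ ($2\le k\le 2r+1$) individually using the Groebner elements $\td{E}_i$ (correctly citing Corollary~\ref{teorem} to place them in $I_{2r}$), pass to the truncated series $T$, and evaluate the full multinomial expansion by generating functions via $1+zg(z)=\sqrt{1+2z}$. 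Your route buys a genuinely self-contained evaluation: the extraction $[z^{r+1}](1+2z)^{(2r+1)/2}=\binom{(2r+1)/2}{r+1}2^{r+1}=\mu_r$ gives the answer without invoking \eqref{formula para lambda} at all, and your factorization $(1+2z)^{(2r+1)/2}=(1+2z)^r\sqrt{1+2z}$ is exactly the paper's Vandermonde-based recursive formula in generating-function form, so your argument in effect subsumes that proposition. The costs are the heavier multi-index bookkeeping in the middle step and the use of the closed form of the Catalan generating function, an external (though standard) fact the paper never needs, since it only uses the recursion $c_r=\sum_{j}c_jc_{r-1-j}$. All the individual claims check out: the valuation bound showing only $C_{-k}$ with $k\le 2r+1$ occur, the compatibility of coefficient extraction with substitution modulo $I_{2r}$, the grading forcing $(T^{2r+1})_{-1}=\alpha C_{-1}^{r+1}$, and the identity $-\lambda_j=c_j(-1/2)^j$ from Lemma~\ref{catalan}.
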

\begin{proof}
By definition we have
$$
(C^{2r+1})_{-1}=[(C^2)^rC]_{-1}=\sum_{j=-2}^{2r}[(C^2)^r]_jC_{-j-1},
$$
since $C_{-j-1}=0$ for $j<-2$ and $[(C^2)^r]_j=0$ for $j>2r$.

But we also have
$[(C^2)^r]_j=\sum_{i_1+\dots+i_r=j}(C^2)_{i_1}\dots(C^2)_{i_r}$. We claim that $i_k\ge -2r$. In fact,
as $i_j\le 2$, then so we have
$$
i_1+\dots+i_{k-1}+i_{k+1}+\dots+i_r\le 2(r-1),
$$
and $j=i_k+(i_1+\dots+i_{k-1}+i_{k+1}+\dots+i_r)\le 2(r-1)+i_k$ as well. Therefore
we get $i_k\ge j-2r+2\ge -2r$, since $j\ge -2$.

By definition we have $E_i=(C^2)_{-i}$ for $i=1,\dots,2r$. Consequently we obtain
$$
(C^2)_{i_1}\dots(C^2)_{i_r}\in I_{2r},\quad\text{if some $i_k<0$.}
$$
It follows that
\begin{equation}\label{suma positiva}
[(C^2)^r]_j\in \sum_{{\substack{i_1+\dots+i_r=j\\i_k\ge 0 }}}(C^2)_{i_1}\dots(C^2)_{i_r}+I_{2r}
=[(x^2+2C_{-1})^r]_{j}+I_{2r},
\end{equation}
holds, since $C^2=x^2+2C_{-1}+(C^2)_{-1}x^{-1}+(C^2)_{-2}x^{-2}+(C^2)_{-3}x^{-3}+\dots$.
But we also have
$$
(x^2+2C_{-1})^r=\sum_{k=0}^r \binom{r}{k}(2C_{-1})^{r-k}x^{2k},
$$
and so
$$
[(x^2+2C_{-1})^r]_{j}=\left\{\begin{array}{ll}\binom{r}{k}(2C_{-1})^{r-k}&\text{if $j=2k$}\\
0,&\text{if $j=2k+1$}.\end{array}\right.
$$
We arrive at
$$
(C^{2r+1})_{-1}\in \sum_{k=0}^r \binom{r}{k}(2C_{-1})^{r-k}C_{-2k-1}+I_{2r}.
$$
Note that by Proposition~\ref{caso par} we have
$$
C_{-2k-1}=\td{E}_{2k}-\lambda_kC_{-1}^{k+1}\in -\lambda_kC_{-1}^{k+1}+I_{2r},
$$
so we obtain
\begin{align*}
(C^{2r+1})_{-1}&\in \sum_{k=0}^r \binom{r}{k}(2C_{-1})^{r-k}(-\lambda_kC_{-1}^{k+1})+I_{2r}\\
&=
\left(\sum_{k=0}^r \binom{r}{k}2^{r-k}(-\lambda_k)\right)(C_{-1})^{r+1}+I_{2r},
\end{align*}
and the formula for $\mu_r$ follows now from~\eqref{c de lambda} and~\eqref{formula para lambda}.
\end{proof}

\begin{corollary}\label{ideales iguales}
For $\td{E}_{2r+1}:=\mu_r (C_{-1})^{r+1}+y$ we have
$$
\la E_1,E_2,\ldots,E_{2r-1},E_{2r},E_{2r+1}\ra=
\la \td{E}_1,\td{E}_2,\ldots,\td{E}_{2r-1},\td{E}_{2r},\td{E}_{2r+1}\ra.
$$
\end{corollary}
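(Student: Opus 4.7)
The plan is to observe that this corollary is almost formal, given the heavy lifting already done in Corollary~\ref{teorem} and Proposition~\ref{central}. From Corollary~\ref{teorem} we already have the equality
$$\langle E_1,E_2,\ldots,E_{2r}\rangle = \langle \td{E}_1,\td{E}_2,\ldots,\td{E}_{2r}\rangle = \td{I}_{2r},$$
so the only issue is to show that adding $E_{2r+1}$ on the left and $\td{E}_{2r+1}$ on the right produces the same ideal.

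The key observation is that the difference $E_{2r+1}-\td{E}_{2r+1}$ already lies in $I_{2r}=\td{I}_{2r}$. Indeed, by definition $E_{2r+1}=(C^{2r+1})_{-1}+y$ and $\td{E}_{2r+1}=\mu_r C_{-1}^{r+1}+y$, so the $y$-terms cancel and Proposition~\ref{central} gives
$$E_{2r+1}-\td{E}_{2r+1}=(C^{2r+1})_{-1}-\mu_r C_{-1}^{r+1}\in I_{2r}=\td{I}_{2r}.$$

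With this in hand, both inclusions are immediate. For $\subset$, write $E_{2r+1}=\td{E}_{2r+1}+(E_{2r+1}-\td{E}_{2r+1})$; the first summand lies in $\langle\td{E}_1,\ldots,\td{E}_{2r+1}\rangle$ trivially and the second lies in $\td{I}_{2r}\subset\langle\td{E}_1,\ldots,\td{E}_{2r+1}\rangle$. For $\supset$, reverse the roles: $\td{E}_{2r+1}=E_{2r+1}-(E_{2r+1}-\td{E}_{2r+1})$, and the corrective term lies in $I_{2r}\subset\langle E_1,\ldots,E_{2r+1}\rangle$. Combined with the equality $I_{2r}=\td{I}_{2r}$ for the first $2r$ generators, the corollary follows.

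There is no real obstacle at this step, since the substantive content, namely the identification of $(C^{2r+1})_{-1}$ modulo $I_{2r}$ with a scalar multiple of $C_{-1}^{r+1}$, has already been established in Proposition~\ref{central}; the proof is just a two-line bookkeeping argument.
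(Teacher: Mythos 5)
Your proposal is correct and follows exactly the paper's argument: the paper likewise observes that $E_{2r+1}-\td{E}_{2r+1}=(C^{2r+1})_{-1}-\mu_r C_{-1}^{r+1}\in I_{2r}$ by Proposition~\ref{central} and then invokes Corollary~\ref{teorem}. You merely spell out the two resulting inclusions in more detail.
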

\begin{proof}
By Proposition~\ref{central} we have $E_{2r+1}-\td{E}_{2r+1}=(C^{2r+1})_{-1}-\mu_r C_{-1}^{r+1}\in I_{2r}$,
hence the corollary follows from Corollary~\ref{teorem}.
\end{proof}
Now we can state our main result.
\begin{theorem}\label{proposicion10}
If we fix the lex order with $C_{-2r-1}>C_{-2r}>\cdots>C_{-3}>C_{-2}>C_{-1}>y$, then
$G_{2r+1}=(\td{E}_1,\td{E}_2,\ldots,\td{E}_{2r},\td{E}_{2r+1})$ is a Groebner basis for the ideal
$$
I=\la E_1,E_2,\ldots,E_{2r-1},E_{2r},E_{2r+1}\ra.
$$
\end{theorem}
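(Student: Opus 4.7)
The plan is to invoke Buchberger's criterion. By Corollary~\ref{ideales iguales} the ideal $I$ coincides with $\la \td{E}_1,\dots,\td{E}_{2r+1}\ra$, so it suffices to show that every $S$-polynomial $S(\td{E}_i,\td{E}_j)$ with $1\le i<j\le 2r+1$ reduces to zero modulo $G_{2r+1}$. All pairs with $j\le 2r$ were already handled in Proposition~\ref{proposicion03}, so only the $2r$ new pairs $S(\td{E}_k,\td{E}_{2r+1})$, $1\le k\le 2r$, remain to be checked.

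The main step is a leading-monomial analysis in the chosen lex order. Since $\mu_r=\frac{2r+1}{(r+1)2^r}\binom{2r}{r}\neq 0$, the leading monomial of $\td{E}_{2r+1}=\mu_r C_{-1}^{r+1}+y$ is $C_{-1}^{r+1}$. On the other hand, $\td{E}_{2s-1}=C_{-2s}$ has leading monomial $C_{-2s}$, and $\td{E}_{2s}=C_{-2s-1}+\lambda_s C_{-1}^{s+1}$ has leading monomial $C_{-2s-1}$ (since $C_{-2s-1}>C_{-1}$ in lex, so the single variable $C_{-2s-1}$ dominates any power of $C_{-1}$). In every case the leading monomial of $\td{E}_k$ is a single variable $C_{-\ell}$ with $\ell\ge 2$, hence coprime to $C_{-1}^{r+1}$. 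By Buchberger's first (product) criterion, coprime leading monomials force each new $S$-polynomial to reduce to zero modulo $\{\td{E}_k,\td{E}_{2r+1}\}\subseteq G_{2r+1}$, concluding the argument.

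Should one prefer a direct verification, the odd case yields $S(\td{E}_{2s-1},\td{E}_{2r+1})=-\mu_r^{-1}y\,\td{E}_{2s-1}$, which reduces in a single step, while in the even case one computes
\[
S(\td{E}_{2s},\td{E}_{2r+1})=\lambda_s C_{-1}^{r+s+2}-\mu_r^{-1}y\,C_{-2s-1},
\]
which is killed in two steps: first dividing by $\td{E}_{2s}$ cancels the $yC_{-2s-1}$ contribution at the price of introducing $\mu_r^{-1}\lambda_s y\,C_{-1}^{s+1}$, after which a single reduction by $\td{E}_{2r+1}$ eliminates both the resulting $yC_{-1}^{s+1}$ and the leftover $\lambda_s C_{-1}^{r+s+2}$ simultaneously. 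The only (mild) obstacle is to correctly identify the leading monomial of $\td{E}_{2s}$ as $C_{-2s-1}$ rather than a power of $C_{-1}$, which is exactly what the lex order $C_{-2s-1}>C_{-1}$ enforces; after that point the coprime-leading-monomial observation makes the proof immediate.
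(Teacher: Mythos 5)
Your proof is correct, and its main route is genuinely different from (and slicker than) the paper's. The paper verifies the new pairs $S(\td{E}_k,\td{E}_{2r+1})$ by running the division algorithm explicitly, exactly as in your ``direct verification'' paragraph (your two computations agree with the paper's line for line, including the two-step reduction in the even case). Your primary argument, however, invokes Buchberger's first criterion: since $\mu_r\neq 0$, one has $\mathrm{lm}(\td{E}_{2r+1})=C_{-1}^{r+1}$, while for $k\le 2r$ the leading monomial of $\td{E}_k$ is the single variable $C_{-k-1}$ (correctly identified even for the even-indexed $\td{E}_{2s}$, where lex with $C_{-2s-1}>C_{-1}$ makes $C_{-2s-1}$ beat $C_{-1}^{s+1}$); these are coprime, so each new $S$-polynomial reduces to zero modulo $\{\td{E}_k,\td{E}_{2r+1}\}\subseteq G_{2r+1}$ without any computation. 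This is a legitimate application of the criterion (reduction to zero over a subset persists over the full basis), and it buys brevity and robustness. In fact it proves more than you claim: \emph{all} leading monomials of $G_{2r+1}$ --- the distinct variables $C_{-2},\dots,C_{-2r-1}$ together with $C_{-1}^{r+1}$ --- are pairwise coprime, so the first criterion disposes of every pair at once, making the appeal to Proposition~\ref{proposicion03} (and the paper's hand computations there) unnecessary. The paper's explicit divisions, on the other hand, exhibit the actual quotients, which can be useful if one wants the standard representations rather than just their existence.
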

\iproof
By Corollary~\ref{ideales iguales} it suffices to prove that the division of the
$S$-polynomials $S(\td{E}_i,\td{E}_j)$ by $G_{2r+1}$ is zero.
If $i,j\le 2r$, then the division algorithm yields the same quotients and remainders as in
Proposition~\ref{proposicion03}, since the remainders become zero before one has to divide by
$\td{E}_{2r+1}$. Note that
$lt(\td{E}_{2r+1})=\mu_r (C_{-1})^{r+1}$, since $\mu_r\ne 0$.
It remains to divide the $S$-polynomials $S(\td{E}_i,\td{E}_{2r+1})$ by $G_{2r+1}$.
We first consider the case $i=2t-1$ for some $t=1,\ldots,r$. We get
\ieqna
S(\td{E}_{2t-1},\td{E}_{2r+1})&=&\frac{C_{-2t}C_{-1}^{r+1}}{C_{-2t}}(C_{-2t})-
\frac{C_{-2t}C_{-1}^{r+1}}{\mu_r C_{-1}^{r+1}}(\mu_r C_{-1}^{r+1}+y)\\
&=&-\frac{1}{\mu_r}yC_{-2t},
\feqna
for all $t=1,\ldots,r$. The first division step yields
$S(\td{E}_{2t-1},\td{E}_{2r+1})=-\frac{1}{\mu_r}\td{E}_{2t-1}$, hence we obtain
$\overline{S(\td{E}_{2t-1},\td{E}_{2r+1})}^{G_{2r+1}}=0$, for all $t=1,\ldots,r$.\\

Now for the $S$-polynomials of $\td{E}_{2t}$ and $\td{E}_{2r+1}$, for some $t=1,\ldots,r$, we have
\ieqna
S(\td{E}_{2t},\td{E}_{2r+1})&=&\frac{C_{-2t-1}C_{-1}^{r+1}}{C_{-2t-1}}(C_{-2t-1}+\lambda_t
C_{-1}^{t+1})-\frac{C_{-2t-1}C_{-1}^{r+1}}{\mu_r C_{-1}^{r+1}}(\mu_r C_{-1}^{r+1}+y)\\
&=&\lambda_t C_{-1}^{r+t+2}-\frac{1}{\mu_r}C_{-2t-1}y.
\feqna
with leading term
$$
lt(S(\td{E}_{2t},\td{E}_{2r+1}))=-\frac{1}{\mu_r}C_{-2t-1}y.
$$
We divide $S(\td{E}_{2t},\td{E}_{2r+1})$ by $G_{2r+1}$, and the first division step gives us
$$
S(\td{E}_{2t},\td{E}_{2r+1})=-\frac{1}{\mu_r}y\td{E}_{2t}+R_1
$$
with $R_1=\lambda_t C_{-1}^{r+t+2}+\frac{\lambda_t}{\mu_r} yC_{-1}^{t+1}$. Finally we note that
$R_1=\frac{\lambda_t}{\mu_r} C_{-1}^{t+1}\td{E}_{2r+1}$, in order
to obtain $\overline{S(\td{E}_{2t},\td{E}_{2r+1})}^{G_{2r+1}}=0$, for all $t=1,\ldots,r$.
This concludes the proof.
\fproof
We give explicitly the Groebner basis $G_{2r+1}=(\td{E}_1,\td{E}_2,\ldots,\td{E}_{2r},\td{E}_{2r+1})$ of $I$ as
\begin{align*}
\td{E}_1=C_{-2},&\quad\td{E}_3=C_{-4},&&\quad \dots \quad\td{E}_{2r-1}=C_{-2r}\\
\td{E}_2=C_{-3}+\lambda_{1}C_{-1}^2,&\quad \td{E}_4=C_{-5}+\lambda_{2}C_{-1}^3,&&\quad\dots\quad
\td{E}_{2r}=C_{-2r-1}+\lambda_{r}C_{-1}^{r+1}\\
\td{E}_{2r+1}=\mu_r (C_{-1})^{r+1}+y,&
\end{align*}
with
$$
\mu_r=\frac{2r+1}{(r+1)2^r}\binom{2r}{r}\quad\text{and}\quad
\lambda_j=\frac{(-1)^{j+1}}{(j+1)2^j}\binom{2j}{j}.
$$
\begin{bibdiv}
\begin{biblist}

\bib{MR2290010}{book}{
   author={Cox, David},
   author={Little, John},
   author={O'Shea, Donal},
   title={Ideals, varieties, and algorithms},
   series={Undergraduate Texts in Mathematics},
   edition={3},
   note={An introduction to computational algebraic geometry and commutative
   algebra},
   publisher={Springer, New York},
   date={2007},
   pages={xvi+551},
   isbn={978-0-387-35650-1},
   isbn={0-387-35650-9},
   doi={10.1007/978-0-387-35651-8},
}
	
\bib{GGV}{article}{
author={Guccione, Jorge Alberto},
author={Guccione, Juan Jos\'e}
author={Valqui, Christian},
   title={A system of polynomial equations related to the Jacobian conjecture},
   journal={arXiv:1406.0886v1 [math.AG]}
   date={3 June 2014}
   }

\bib{K}{book}{
   author={Koshy, Thomas},
   title={Catalan numbers with applications},
   publisher={Oxford University Press, Oxford},
   date={2009},
   pages={xvi+422},
   isbn={978-0-19-533454-8},
}

\end{biblist}
\end{bibdiv}

\end{document}